\newtheorem{definition}{Definition}
\newtheorem{theorem}{Theorem}
\newtheorem*{theorem*}{Theorem}
\newtheorem{conjecture}{Conjecture}
\newtheorem{remark}{Remark}
\newtheorem{observation}{Observation}
\newtheorem{proposition}{Proposition}
\newtheorem*{proposition*}{Proposition}
\newtheorem{lemma}{Lemma}
\newtheorem*{lemma*}{Lemma}
\newtheorem*{claim*}{Claim}
\newtheorem{corollary}{Corollary}
\DeclareMathOperator{\const}{const}
\lstdefinestyle{mystyle}{
    basicstyle=\ttfamily\footnotesize
}
\begin{document}
\title{An asymptotic lower bound on the number of polyominoes}
\author{Vuong Bui}
\address{Vuong Bui, Institut f\"ur Informatik, Freie Universit\"{a}t
Berlin, Takustra{\ss}e~9, 14195 Berlin, Germany,
LIRMM, Universit\'e de Montpellier, 161 Rue Ada, 34095 Montpellier, France, and UET, Vietnam National University, Hanoi, 144 Xuan Thuy Street, Hanoi 100000, Vietnam}
\email{bui.vuong@yandex.ru}

\begin{abstract}
    Let $P(n)$ be the number of polyominoes of $n$ cells and $\lambda$ be Klarner's constant, that is, $\lambda=\lim_{n\to\infty} \sqrt[n]{P(n)}$. We show that there exist some positive numbers $A,T$, so that for every $n$
    \[
        P(n) \ge An^{-T\log n} \lambda^n.
    \]
    This is somewhat a step toward the well known conjecture that there exist positive $C,\theta$ so that $P(n)\sim Cn^{-\theta}\lambda^n$ for every $n$. In fact, if we assume another popular conjecture that $P(n)/P(n-1)$ is increasing, we can get rid of $\log n$ to have 
    \[
        P(n)\ge An^{-T}\lambda^n.
    \]
    
    Beside the above theoretical result, we also conjecture that the ratio of the number of some class of polyominoes, namely inconstructible polyominoes, over $P(n)$ is decreasing, by observing this behavior for the available values. The conjecture opens a nice approach to bounding $\lambda$ from above, since if it is the case, we can conclude that
    \[
        \lambda < 4.1141,
    \]
    which is quite close to the current best lower bound $\lambda > 4.0025$ and greatly improves the current best upper bound $\lambda < 4.5252$.
    
    The approach is merely analytically manipulating the known or likely properties of the function $P(n)$, instead of giving new insights of the structure of polyominoes. The techniques can be applied to other lattice animals and self-avoiding polygons of a given area with almost no change.
\end{abstract}
\maketitle

\section{Introduction}
Polyomino is a popular type of geometric figures and it can be recognized by the example in Fig. \ref{fig:polyomino-lattice}. Rigorously speaking, a polyomino is a finite subset of the points in the square lattice whose induced graph is connected. In fact, when the lattice is not the square lattice, the corresponding set has another specific name (e.g. polyiamond for the honeycomb lattice). In general, all of them are called lattice animals.
In this article, we study only polyominoes. It means we work on the square lattice, but we will depict polyominoes in the equivalent form with cells instead of points, which is perhaps more convenient to the readers.

\begin{figure}[ht]
    \includegraphics[width=0.4\textwidth]{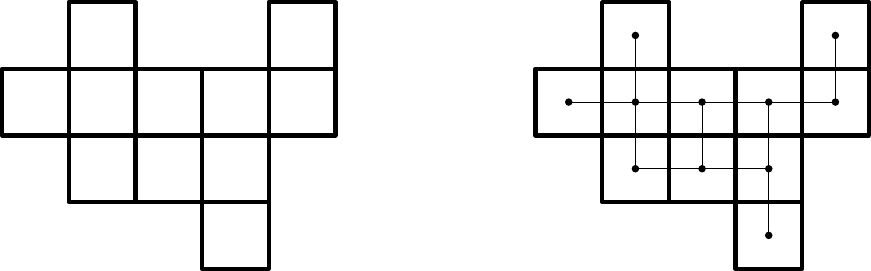}
    \caption{A polyomino with its underlying presentation in the square lattice}
    \label{fig:polyomino-lattice}
\end{figure}

There are various articles, books on the topic of polyominoes. For an up-to-date reference on the subject with research problems, we recommend the readers to check the chapter ``Polyominoes'' by Barequet, Golomb, and Klarner in the book \cite{toth2017handbook}. This article studies the asymptotic behaviors of the number of polyominoes.

We denote by $P(n)$ the number of polyominoes of $n$ cells. Note that we count equivalent polyominoes as one polyomino only. Here, polyominoes are said to be equivalent if they are translates of each other. In literature, other operations are also considered, e.g. congruent operations including translation, rotation, and reflection. However, the equivalence by translation is probably the most popular.

We will briefly introduce the origin of the growth constant of $P(n)$ and a well known conjecture on the estimation of $P(n)$. After that, we will give partial results toward the conjecture, and propose a conditional upper bound on the growth constant as well.

\subsection*{Growth constant and a conjecture on the order}
Let us define a lexicographical order on the cells: A cell $c_1$ is said to be smaller than another cell $c_2$ if either $c_1$ is on a column to the left of $c_2$, or the two cells are on the same column and $c_1$ is under $c_2$.
 
Given two polyominoes of $\ell,m$ cells, we can concatenate the two polyominoes to obtain a unique polyomino of $\ell+m$ cells by placing the largest cell of the first polyomino right under the smallest cell of the second polyomino. An example is given in Fig. \ref{fig:polyomino-concatenation}, where we concatenate the polyomino in Fig. \ref{fig:polyomino-lattice} with a translate of itself (the order of the cells in the first polyomino is also depicted). This type of concatenations gives the following result.

\begin{figure}[ht]
    \includegraphics[width=0.3\textwidth]{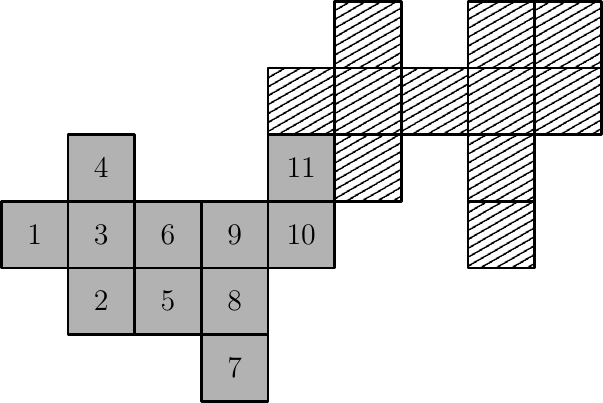}
    \caption{A polyomino concatenated with a translate of itself}
    \label{fig:polyomino-concatenation}
\end{figure}

\begin{lemma}[Klarner 1967 \cite{klarner1967cell}] \label{lem:supermulti}
    The sequence $P(n)$ is supermultiplicative, that is for any $\ell,m$, we have
    \[
        P(\ell+m)\ge P(\ell) P(m).
    \]
\end{lemma}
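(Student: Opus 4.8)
The plan is to turn the concatenation described above into an injective map and then count. Fix $\ell,m$, and let $\mathcal{A}$ and $\mathcal{B}$ be the sets of polyominoes (up to translation) with $\ell$ and $m$ cells, so $|\mathcal{A}|=P(\ell)$ and $|\mathcal{B}|=P(m)$. Given representatives $A\in\mathcal{A}$ and $B\in\mathcal{B}$, let $a^{*}$ be the lexicographically largest cell of $A$ and $b_{*}$ the lexicographically smallest cell of $B$, and translate $B$ to a copy $B'$ in which $b_{*}$ is moved to the cell immediately above $a^{*}$. Set $\phi(A,B)=A\cup B'$. The first step is to check that $\phi(A,B)$ is a genuine polyomino with $\ell+m$ cells: it is connected because $a^{*}$ and the image of $b_{*}$ are vertically adjacent, so the induced graphs of $A$ and $B'$ are joined by an edge; and $|A\cup B'|=\ell+m$ provided $A$ and $B'$ are disjoint.

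Disjointness is the crux, and it rests on a one-line observation about the lexicographic order: the largest cell of any polyomino lies in its rightmost column and is the topmost cell of that column, while the smallest cell of any polyomino lies in its leftmost column and is the bottommost cell of that column. Consequently every cell of $A$ is either in a column strictly left of the column of $a^{*}$, or in that column at height at most that of $a^{*}$; and (since translation preserves the lexicographic order) every cell of $B'$ is either in the column of $a^{*}$ at height strictly above that of $a^{*}$, or in a column strictly to the right. These two descriptions are incompatible, so $A\cap B'=\emptyset$; the same observation in fact shows that every cell of $A$ is lexicographically smaller than every cell of $B'$.

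This strict separation is exactly what gives injectivity, which is what the argument needs. If $C=\phi(A,B)$, then the $\ell$ lexicographically smallest cells of $C$ are precisely the cells of $A$, and the remaining $m$ cells are precisely the cells of $B'$; hence $C$ determines $A$ on the nose and $B$ up to translation, i.e.\ it determines the pair $(A,B)\in\mathcal{A}\times\mathcal{B}$. Therefore $\phi\colon \mathcal{A}\times\mathcal{B}\to\{\text{polyominoes with }\ell+m\text{ cells}\}$ is injective, and $P(\ell+m)\ge|\mathcal{A}|\cdot|\mathcal{B}|=P(\ell)P(m)$.

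There is essentially no deep step here; the only thing to be careful with is the bookkeeping with the lexicographic order, in particular the fact that stacking $B$ directly on top of $A$ at these two extreme cells simultaneously produces an edge (for connectivity) and a strict lexicographic gap (for disjointness and for the unique recovery of the two factors). I would isolate the statement about where the extreme cells of a polyomino sit as a short sub-claim, after which everything above is immediate.
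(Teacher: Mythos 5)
Your proof is correct and is exactly the paper's (Klarner's) argument: it formalizes the concatenation described just before the lemma, placing the largest cell of the first polyomino directly under the smallest cell of the second, and verifies connectivity, disjointness, and injectivity via the lexicographic separation. Nothing further is needed.
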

The lemma was first used by Klarner to prove that the limit of $\sqrt[n]{P(n)}$ exists, by using Fekete's lemma \cite{fekete1923verteilung} for supermultiplicative sequences. It is Klarner after whom the following constant is named.
\begin{definition}[Klarner's constant]
    The growth constant of the polyominoes is the limit
    \[
        \lambda\coloneqq\lim_{n\to\infty} \sqrt[n]{P(n)}.
    \]
\end{definition}

Beside the existence of the limit, Fekete's lemma also states that $\lambda=\sup_n \sqrt[n]{P(n)}$, which means
\[
    P(n)\le \lambda^n.
\]
By the available values of $P(n)$, it seems that $P(n)$ is quite lower than $\lambda^n$, probably by a polynomial number of times. In fact, the following conjecture on the estimation of $P(n)$ is widely believed in literature, and has stood open since as far as 1976 \cite{gaunt1976percolation,sykes1976percolation}.
\begin{conjecture}
\label{conj:order}
    There exist constants $C,\theta$ so that
    \[
        P(n) \sim C n^{-\theta} \lambda^n.
    \]
\end{conjecture}
In various sources, e.g. \cite{jensen2000statistics}, it is even believed that $\theta=1$. However, it seems that no attempt has been made to settle down either conjecture, other than the evidence by the available values of $P(n)$.

\subsection*{A theoretical lower bound on $P(n)$}
We will give a step toward Conjecture \ref{conj:order} in Theorem \ref{thm:lower-bound}, by using only two functional properties of $P(n)$ instead of the actual values of $P(n)$ or any other insights on the structure of polyominoes. One property is actually the supermultiplicativity in Lemma \ref{lem:supermulti}. The other is in Lemma \ref{lem:supporting-upper-bound} below. At first, we give the origin of the lemma by the following notion of composition, whose instances include the concatenation for Lemma \ref{lem:supermulti}.

\begin{definition}
    A composition of two polyominoes is the union of some translates of the polyominoes so that the translates are disjoint (except at the edges of the cells) and the union of the translates is connected. 
\end{definition}
Fig. \ref{fig:polyomino-composition} illustrates an example of the compositions of two polyominoes.
\begin{figure}[ht]
    \includegraphics[width=0.5\textwidth]{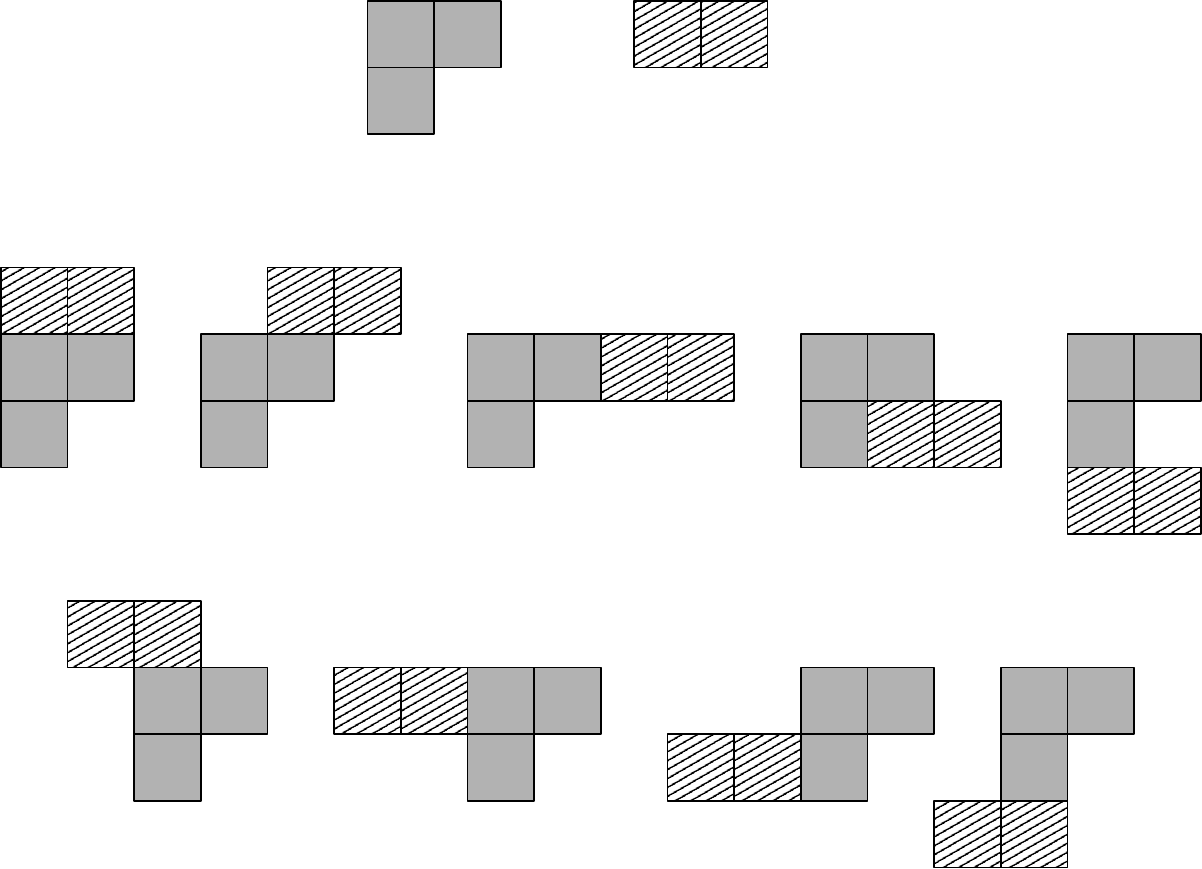}
    \caption{Compositions of two polyominoes of $3$ cells and $2$ cells}
    \label{fig:polyomino-composition}
\end{figure}

A composition of two polyominoes $X,Y$ of $\ell,m$ cells is a polyomino of $\ell+m$ cells. To let two translates be connected, at least one cell $x$ of $X$ touches at least one cell $y$ of $Y$. There are at most $4$ ways to do that: either $x$ to the left of $y$, or to the right of $y$, or above $y$, or below $y$. Therefore, we have the following observation.
\begin{observation}
    There are at most $4\ell m$ compositions of any two given polyominoes of $\ell,m$ cells.
\end{observation}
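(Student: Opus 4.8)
The plan is to reduce a composition to a single translation vector and then count how many such vectors are possible. Since a composition is itself a polyomino (a union of translates), and polyominoes are counted only up to translation, I may fix one of the two input polyominoes, say $X$, in place and regard a composition as a choice of translate $Y+v$ of $Y$ (with $v\in\mathbb{Z}^2$) such that $X$ and $Y+v$ are disjoint and $X\cup(Y+v)$ is connected. Two translates of both polyominoes by a common vector give the same composition up to translation, so fixing $X$ loses nothing. It therefore suffices to bound the number of admissible vectors $v$.

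The key step is the connectivity constraint. If $X$ and $Y+v$ are disjoint but $X\cup(Y+v)$ is connected, then the induced graph of the union must contain at least one edge joining a cell of $X$ to a cell of $Y+v$; that is, there exist a cell $x\in X$ and a cell $y\in Y$ such that $x$ and $y+v$ are edge-adjacent in the square lattice. Writing $d$ for the displacement between two edge-adjacent cells, $d$ ranges over the four unit lattice vectors, and the adjacency relation reads $x=y+v+d$, i.e.\ $v=x-y-d$. Hence every admissible $v$ is of this special form.

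Finally I would just count the parameters: $x$ is one of the $\ell$ cells of $X$, $y$ is one of the $m$ cells of $Y$, and $d$ is one of the $4$ directions, so the set of admissible vectors $v$ has size at most $4\ell m$. Each composition of $X$ and $Y$ corresponds (after fixing $X$) to one such vector, giving the claimed bound. Distinct triples $(x,y,d)$ may produce the same $v$, and some $v$ in the list may violate disjointness or connectivity; both effects only shrink the count, so nothing further is needed. There is no real obstacle here — the only point requiring a moment's care is the observation that "composition" already incorporates the identification up to translation, which is precisely what licenses fixing $X$ at the outset.
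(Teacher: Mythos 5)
Your proof is correct and follows essentially the same route as the paper: the paper also counts compositions by choosing a touching pair of cells $x\in X$, $y\in Y$ and one of the four relative placements, giving at most $4\ell m$ possibilities. Your reformulation via the translation vector $v=x-y-d$ just makes explicit the identification up to translation that the paper leaves implicit.
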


\begin{remark}
   Two compositions can be actually the same polyomino (or more precisely translates of each other). Therefore, the number of compositions can be in principle quite lower than $4\ell m$, e.g. the number of compositions is of only linear order for two vertical or horizontal bars. The order can be as low as $\sqrt{n}$ when $\ell=m=n$, which is attained by two square blocks. In \cite{asinowski2021number}, it is shown that the order cannot be lower than $\sqrt{n}$, and an interesting example shows that the order can be as high as $\frac{n^2}{2^{8(\sqrt{\log_2 n})}}$, which is very close to the bound of order $n^2$ in the observation. It is natural to ask if the order $n^2$ is attainable, for which Rote has suggested that it is not possible (private communication). It seems to be a good research direction, that is why we give more background than necessary.
\end{remark}

The following proposition is a useful result about compositions.
\begin{proposition}[A slight modification\footnote{The original condition is $\lceil (n-1)/4\rceil \le \ell \le \lfloor (3n+1)/4\rfloor$, but we find working directly with the fractions more convenient later.} of Barequet and Barequet \cite{barequet2015improved}]
\label{prop:summand-size}
    Every polyomino of $n\ge 2$ cells is a composition of two polyominoes of $\ell,n-\ell$ cells for some $\ell$ satisfying $(n-1)/4 \le \ell \le (3n+1)/4$.
\end{proposition}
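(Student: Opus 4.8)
The plan is to reduce the statement to a graph-theoretic fact about the adjacency graph of the polyomino, using only that this graph has maximum degree at most $4$. Let $P$ be a polyomino with $n \ge 2$ cells and let $T$ be a spanning tree of its adjacency graph. A spanning tree on $n \ge 2$ vertices has at least two leaves, so I root $T$ at one of them; then the root has exactly one child, while every other vertex, having degree at most $4$ in $T$, has at most $3$ children. For a vertex $v$ write $s(v)$ for the number of vertices of the subtree hanging below $v$, so $s(r) = n$ at the root $r$ and $s(v) = 1$ at each leaf. The cells of the subtree below $v$ form a polyomino (they are connected through tree edges, which are lattice adjacencies), and so do the remaining cells of $P$ (deleting a rooted subtree from a tree leaves a connected tree). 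Hence it suffices to find a vertex $v \ne r$ with $(n-1)/4 \le s(v) \le (3n+1)/4$: taking $X$ to be the cells below $v$ and $Y$ the rest displays $P$ as a composition of the polyominoes $X,Y$, via the trivial placement of both back inside $P$, of sizes $\ell = s(v)$ and $n-\ell$; both pieces are nonempty because the interval forces $1 \le s(v) \le n-1$ for $n \ge 2$.

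To locate such a $v$, I would descend from the root, at each step moving to a child of largest subtree size; along this path $s$ strictly decreases from $n$ down to $1$. Let $v_i$ be the last vertex on the path with $s(v_i) > (3n+1)/4$, which exists since $s(r) = n > (3n+1)/4$ for $n \ge 2$ while $s$ eventually reaches $1 \le (3n+1)/4$. Because $s(v_i) > (3n+1)/4 > 1$, the vertex $v_i$ is not a leaf, so the path has a next vertex $v_{i+1}$, and $s(v_{i+1}) \le (3n+1)/4$ by maximality of $i$. On the other hand $v_i$ has at most $3$ children, each of subtree size at most $s(v_{i+1})$, so $s(v_i) \le 1 + 3\,s(v_{i+1})$, which together with $s(v_i) > (3n+1)/4$ yields $s(v_{i+1}) > (n-1)/4$. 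Thus $v = v_{i+1}$ works.

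The only place where the geometry of polyominoes enters is the bound of at most $3$ children at a non-root vertex, forced by the maximum degree $4$ of the square lattice, and it is precisely this $3$ that produces the denominator $4$ in the interval $[(n-1)/4,(3n+1)/4]$. I do not anticipate a serious obstacle; the only care needed is bookkeeping of the off-by-one constants --- rooting at a leaf to remove the root's extra child, and the endpoint checks $s(v_i) > 1$ and $(3n+1)/4 \ge 1$ --- together with the observation that a composition is witnessed here simply by the trivial placement of $X$ and $Y$ within $P$.
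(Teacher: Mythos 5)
Your proof is correct and follows essentially the same route as the paper: split the polyomino along a subtree of a spanning tree of the cell-adjacency graph, using the degree bound of the square lattice (at most $3$ children at a non-root vertex) to force a subtree size in $[(n-1)/4,(3n+1)/4]$. Your explicit descent along the heaviest child, with the tree rooted at a leaf, is in fact slightly more careful than the paper's contradiction argument, which implicitly needs the same descent to handle the case where a subtree of the large subtree's root is itself larger than $(3n+1)/4$.
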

\begin{proof}
    We treat the polyomino as a connected graph whose vertices are the cells and the edges are the adjacencies between the cells. Each vertex has the degree at most $4$. Consider a rooted spanning tree. If any subtree has the number of vertices in the range $[(n-1)/4, (3n+1)/4]$, then we are done since one of the two polyominoes is the polyomino induced by the subtree, and the other polyomino induced by the remaining vertices is also connected.
    
    In the remaining case when there is no such subtree, we consider a maximal subtree that has more than $(3n+1)/4$ vertices (in the sense that none of its proper subtrees satisfies the properties). If the root of the subtree has $4$ subtrees, then the subtree is actually the whole tree of $n$ leaves itself. We have a contradiction, since the total number of vertices in the subtrees of the root is less than $4(n-1)/4 = n-1$. If the root of the subtree has at most $3$ subtrees, we have again a contradiction, since the total number of vertices in these subtrees is less than $3(n-1)/4 = (3n+1)/4 - 1$.
\end{proof}
\begin{remark}\label{rem:variant-in-size}
        Although we would keep the proposition as the original version in \cite{barequet2015improved}, we can actually narrow down the interval of $\ell$ to $[(n-1)/4,n/2]$ by assuming that $\ell\le n-\ell$ (since the roles of $\ell,n-\ell$ are exchangeable). Lemma \ref{lem:supporting-upper-bound} employs this trick.
\end{remark}

The following lemma, which is the other key lemma beside Lemma \ref{lem:supermulti}, is a corollary of Proposition \ref{prop:summand-size}.
\begin{lemma} \label{lem:supporting-upper-bound}
    For every $n$, there exists some $\ell$ so that $(n-1)/4 \le \ell \le n/2$ and
    \[
        P(n)\le n^3 P(\ell) P(n-\ell).
    \]
\end{lemma}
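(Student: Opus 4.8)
The plan is to upgrade Proposition~\ref{prop:summand-size} from a per-polyomino statement to a single value of $\ell$ that serves all polyominoes of $n$ cells, by a pigeonhole argument, and then to control how many polyominoes a fixed pair of smaller polyominoes can possibly build, using the Observation on the number of compositions.

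First, for each polyomino $Q$ of $n$ cells, Proposition~\ref{prop:summand-size} supplies at least one index $\ell$ with $(n-1)/4 \le \ell \le (3n+1)/4$ for which $Q$ is a composition of a polyomino of $\ell$ cells and a polyomino of $n-\ell$ cells; fix one such $\ell(Q)$ once and for all. This partitions the $P(n)$ polyominoes of $n$ cells according to the value of $\ell(Q)$. The interval $[(n-1)/4,(3n+1)/4]$ has length $(n+1)/2$, hence it contains at most $n$ integers (for the few small $n$ where this should be checked, or where $n<2$, the whole claim is immediate), so by pigeonhole some value $\ell^\ast$ is attained by a subfamily $\mathcal{C}$ of the $n$-cell polyominoes with $|\mathcal{C}|\ge P(n)/n$.

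Next I would bound $|\mathcal{C}|$ from above. By construction every element of $\mathcal{C}$ is a composition of some polyomino with $\ell^\ast$ cells and some polyomino with $n-\ell^\ast$ cells. There are exactly $P(\ell^\ast)\,P(n-\ell^\ast)$ ordered pairs of such polyominoes, and by the Observation each pair admits at most $4\ell^\ast(n-\ell^\ast)$ compositions; therefore
\[
    |\mathcal{C}| \le 4\ell^\ast(n-\ell^\ast)\,P(\ell^\ast)\,P(n-\ell^\ast).
\]
Combining the two estimates gives $P(n)/n \le 4\ell^\ast(n-\ell^\ast)\,P(\ell^\ast)\,P(n-\ell^\ast)$, and since $\ell^\ast(n-\ell^\ast)\le n^2$ this rearranges to the claimed inequality with $\ell=\ell^\ast$, which indeed satisfies $(n-1)/4\le \ell^\ast \le (3n+1)/4$.

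There is no genuine difficulty here: the argument is pigeonhole plus the trivial composition count, and the only points deserving care are (i) verifying that the number of admissible integer values of $\ell$ is at most $n$, so that the pigeonhole loss is absorbed into a single factor of $n$, and (ii) checking that the product of this factor with the bound $4\ell^\ast(n-\ell^\ast)$ stays at most $4n^3$ — which it does, in fact with a factor of $4$ to spare since $\ell^\ast(n-\ell^\ast)\le n^2/4$. Degenerate small values of $n$ are disposed of by inspection.
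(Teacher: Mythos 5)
Your argument is correct and is essentially the paper's own proof in pigeonhole clothing: both rest on Proposition~\ref{prop:summand-size} plus the $4\ell m$ composition count, and your "largest class" step is just the paper's bound of the sum $\sum_{\ell} 4\ell(n-\ell)P(\ell)P(n-\ell)$ by (number of admissible $\ell$) times a single term. The only cosmetic difference is which witness $\ell$ you exhibit (the most popular class rather than the maximizer of $P(\ell)P(n-\ell)$), and your bookkeeping of the factors ($\le n$ values of $\ell$, $\ell^\ast(n-\ell^\ast)\le n^2/4$) checks out.
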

\begin{proof}
    There are at most $4xy$ compositions of two polyominoes of $x,y$ cells. Therefore, it follows from the variant of Proposition \ref{prop:summand-size} in Remark \ref{rem:variant-in-size} that
    \begin{align*}
        P(n)&\le \sum_{(n-1)/4 \le \ell' \le n/2} 4\ell'(n-\ell') P(\ell')P(n-\ell') \\
        &\le \sum_{(n-1)/4 \le \ell' \le n/2} (\ell'+n-\ell')^2 P(\ell')P(n-\ell') \\
        &\le n^3 P(\ell)P(n-\ell)
    \end{align*}
    for some $\ell$ that maximizes $P(\ell)P(n-\ell)$.
\end{proof}
\begin{remark}
    In fact, we can have a better leading constant than $1$ in the upper bound, since the number of integers $\ell'$ in range $[(n-1)/4, n/2]$ is about $n/4$ instead of $n$. One may apply further optimizations here. However, we keep the leading constant simple, since the higher priority is to optimize the degree of the leading polynomial $n^3$ instead, as discussed in the remark below. 
\end{remark}
\begin{remark}
    If Conjecture \ref{conj:order} holds, then it follows from Lemma \ref{lem:supporting-upper-bound} that the value of $\theta$ in the conjecture is at most $3$. However, it is a rather poor bound (note that it is believed to be $1$ by empirical evidence). The trivial lower bound is $0$, due to Lemma \ref{lem:supermulti}. A more advanced lower bound on $\theta$ is $1/2$ due to Madras \cite{madras1995rigorous}:
    \[
        P(n)\le \frac{1}{\sqrt{8}} n^{-1/2}\lambda^n.
    \]
    An open problem is to decrease the degree $3$. Note that the constant $3$ appears many times in the article. (Another reason is due to the denominator $3$ in Definition \ref{def:weak-submulti} in Section \ref{sec:lower-bound}.)
\end{remark}
Although $P(n)$ has the descriptive meaning of counting polyominoes, the functional properties in Lemma \ref{lem:supermulti} and Lemma \ref{lem:supporting-upper-bound} are sufficient for proving the following asymptotic lower bound of $P(n)$. Since not many functional properties of $P(n)$ are known, the theorem below can be seen as a summary of the current knowledge and we kind of have an idea of how far we are from the conjecture.

\begin{theorem} \label{thm:lower-bound}
    There exist positive numbers $A,T$ so that for every $n$,
    \[
        P(n)\ge An^{-T\log n} \lambda^n.
    \]
    In particular, for every $n$,
    \[
        P(n)\ge 3^{-18\alpha\log 3} n^{-12\alpha\log 3} n^{ -3\alpha\log n}\lambda^n
    \]
    where $\alpha=\frac{3}{\log\frac{16}{5}}$.
\end{theorem}
Note that we denote by $\log x$ the binary logarithm $\log_2 x$ for the whole article.

Rounding the constants, we have $\alpha\approx 1.7878$ and
\[
    P(n)\ge 3^{-51.0038} n^{-34.0026} n^{5.3633\log n} \lambda^n.
\]

The belief in Conjecture \ref{conj:order} is a bit improved since
\[
    An^{-T\log n} \lambda^n \le An^{-T} \lambda^n \le \lambda^n.
\]

In \cite{van1992number}, a similar result\footnote{In \cite{van1992number}, high dimensional lattices are also treated, but we only bring the results for $d=2$ to compare.}  is known for the number of trees $t_n$ weakly embedded in the hypercubic lattice: There exist positive $A^*,T^*$ so that for every $n$, we have $t_n\ge A^*n^{-T^*\log n} (\lambda^*)^n$ where $\lambda^*$ is the growth rate of the number of trees. However, the estimation there (for $d=2)$ has weaker constants:
\begin{equation}\label{eq:tree-lower-bound}
    t_n\ge e^{-24\alpha^*} n^{-24\alpha^*} e^{-9\alpha^*(\log n)^2} (\lambda^*)^n,
\end{equation}
where $\alpha^*=\frac{1}{\log\frac{4}{3}}$. (Note that $\alpha<\alpha^*$.)

If we wish for the strong form $t_n\ge A^*n^{-T^*}(\lambda^*)^n$ with probably some negotiation in the values of $n$, \cite[Equation ($24$)]{madras2017location} states that
\[
    t_n\ge n^{-4}(\lambda^*)^n
\]
for \emph{infinitely many} values of $n$.
On the other hand, we show in Theorem \ref{thm:conditional-lower-bound} that $P(n)\ge 3^{-18}n^{-9}\lambda^n$ for all $n$ \emph{provided that} Conjecture \ref{conj:increasing} on the increasing monotonicity of $P(n)/P(n-1)$ holds.

The merit\footnote{In fact, the author was at first not aware of \cite{van1992number}, which is fortunately suggested by an anonymous reviewer.} of the article lies in analytically manipulating the properties of the function $P(n)$ while even not giving any significantly new properties of $P(n)$.
More comparisons to the work \cite{van1992number} in the approaches are given before the proof of Theorem \ref{thm:lower-bound}.

\begin{remark}
   In fact, the natures of trees and polyominoes are quite similar. While decomposing the trees is quite straightforward, we can apply the same thing with a spanning tree of the underlying graph of the polyominoes, as in the proof of Proposition \ref{prop:summand-size}. 
On the other hand, other kinds of graphs may be hard to deal with. For example, for the number of self-avoiding walks $c_n$ with the length $n$, we only know by the result of Hammersley and Welsh \cite{hammersley1962further} that
\[
    \mu^n\le c_n\le \mu^n e^{\kappa\sqrt{n}},
\]
where $\mu$ is the growth rate and $\kappa$ is some constant. This result has stood unimproved since $1962$. The similar situation applies to the number of self-avoiding polygons $q_{2n}$ with the perimeter $2n$. It is computed by $q_{2n}=\frac{c_{2n-1}(e_1)}{n}$, where $c_{2n-1(e_1)}$ is the number of self-avoiding walks of length $2n$ returning to the origin with an edge of direction $e_1$, and
\[
    \mu^{2n} e^{-C \sqrt{n}} \le c_{2n+1}(e_1) \le (n+1) \mu^{2n+2},
\]
where $C$ is a constant and the growth rate here $\mu$ is the same as the growth rate of the number of self-avoiding walks. Note that the same approach in this work can apply (with almost no change) to the number of self-avoiding polygons of a given area, since they are equivalent to polyominoes that do not have holes. (The approach in this work is not affected by holes.)
\end{remark}

\subsection*{A conditional lower bound on $P(n)$}
It seems that getting rid of the factor $\log n$ in $P(n)\ge An^{-T\log n}\lambda^n$ is hard. However, we can achieve it provided that the following conjecture holds. It is quite popular and suggested by the available values of $P(n)$.
\begin{conjecture} \label{conj:increasing}
    The sequence $P(n)/P(n-1)$ is monotonically increasing.
\end{conjecture}
The origin of Conjecture \ref{conj:increasing} is hard to trace,\footnote{An anonymous reviewer of this paper suggests that Conjecture \ref{conj:increasing} is a consequence of the ratio method of series analysis, due to Sykes and Domb, developed in the 1950s and first given in the PhD thesis of Sykes. 
However, we have unfortunately no access to the thesis.} but a reference is Chapter $14$ by Barequet, Golomb, and Klarner about polyominoes in the book \cite{toth2017handbook} (Problem $14.3.7$). 

\begin{theorem} \label{thm:conditional-lower-bound}
    If Conjecture \ref{conj:increasing} holds, then there exist positive numbers $A,T$ so that for every $n$,
    \[
        P(n)\ge An^{-T}\lambda^n.
    \]
    In particular, for every $n$,
    \[
        P(n)\ge 3^{-18} n^{-9}\lambda^n.
    \]
\end{theorem}

We should note that proving Conjecture \ref{conj:increasing} is probably hard.
While the proof that the limit of $\sqrt[n]{P(n)}$ exists is fairly straightforward by the supermultiplicativity of $P(n)$ in Lemma \ref{lem:supermulti}, another interesting asymptotic result, which is even weaker than Conjecture \ref{conj:increasing}, takes a quite large effort.
\begin{theorem*}[Madras 1999 \cite{madras1999pattern}]
    The limit of $P(n)/P(n-1)$ as $n\to\infty$ exists. 
\end{theorem*}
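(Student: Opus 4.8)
The plan is to first reduce the claim to ruling out oscillation, then to supply the missing combinatorial input via a pattern theorem and a surgery argument. For the reduction, write $r_n=P(n)/P(n-1)$. Taking $\ell=1$ in Lemma~\ref{lem:supermulti} gives $r_n\ge 1$, and since every $n$-cell polyomino is obtained by attaching a cell to one of the at most $2(n-1)+2$ empty cells bordering some $(n-1)$-cell polyomino, $r_n\le 2n$. Telescoping, $\tfrac1n\sum_{k=2}^n\log r_k=\tfrac1n\log P(n)\to\log\lambda$, so \emph{if} $r_n$ converges its limit is $\lambda$; the whole problem is thus to show $r_n$ does converge. I would stress here that this does not follow from the functional properties already in hand: the sequence equal to $\lambda^n$ for even $n$ and to $\lambda^n/e$ for odd $n$ is supermultiplicative and also obeys the bound of Lemma~\ref{lem:supporting-upper-bound} (the factor $4n^3$ swamps the $e^2$ that a balanced split can cost), yet its consecutive ratio oscillates between $\lambda/e$ and $\lambda e$. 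So a genuinely combinatorial ingredient is unavoidable.

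That ingredient is a pattern theorem. I would fix a small polyomino $B$ (a ``pattern''), a one-cell enlargement $B^+=B\cup\{w\}$ with $w$ adjacent to $B$ in a prescribed position, and a prescribed empty ``collar'' of cells, chosen so that whenever a translate of $B$ together with its collar occurs inside a polyomino $C$ (call this a \emph{legal copy}), deleting the designated leaf cell of that copy and inserting $w$ into it are both legal connectivity-preserving operations, and are mutually inverse. The form of the pattern theorem I would need is a concentration statement: there are constants $\gamma,\gamma^+>0$ so that, for all large $n$, all but $o(P(n))$ of the $n$-cell polyominoes contain $(\gamma+o(1))n$ legal copies of $B$, and likewise $(\gamma^++o(1))n$ legal copies of $B^+$. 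The lower-tail half of this --- that atypically few (fewer than $cn$) disjoint copies occur in only $(\lambda-\delta)^n=o(P(n))$ polyominoes --- is Madras's pattern theorem \cite{madras1999pattern}, proved by a compression/large-deviation estimate; strengthening the lower tail to a two-sided law of large numbers is an additional, more routine, subadditivity-type step. This pattern theorem is the part that, as the text says, ``takes a quite large effort'', and it is the main obstacle: the compression argument has to keep the animal connected while provisionally removing candidate copies of $B$, and one must verify the entropy loss is genuinely exponential.

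Granting the pattern theorem, I would conclude with a double count. Let $N(n)$ and $N^+(n)$ be the numbers of pairs $(C,\iota)$ with $|C|=n$ and $\iota$ a legal copy in $C$ of $B$, respectively of $B^+$. By the concentration statement, $N(n)=(\gamma+o(1))nP(n)$ and $N^+(n)=(\gamma^++o(1))nP(n)$. The insertion surgery $B\rightsquigarrow B^+$ is a bijection from $\{(C,\iota):|C|=n,\ \iota\text{ a legal }B\text{-copy}\}$ onto $\{(D,\iota^+):|D|=n+1,\ \iota^+\text{ a legal }B^+\text{-copy}\}$, its inverse being the deletion; hence $N(n)=N^+(n+1)$, i.e.\ $(\gamma+o(1))nP(n)=(\gamma^++o(1))(n+1)P(n+1)$, so $P(n+1)/P(n)\to\gamma/\gamma^+$. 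The reduction then forces $\gamma/\gamma^+=\lambda$, and the theorem follows. Besides the pattern theorem itself, the delicate point in this last step is the joint design of $B$, $B^+$, and the collar making the two surgeries honest mutually inverse bijections on legal copies: a version surjective only up to polynomial factors is easy and yields merely $r_n=\Theta(1)$, whereas the exact bijection is precisely what turns the ``geometric mean tends to $\lambda$'' identity into genuine convergence.
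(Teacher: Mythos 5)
First, a point of orientation: the paper does not prove this statement --- it is quoted as a black box from Madras \cite{madras1999pattern}, so there is no internal proof to compare your sketch against. Judged on its own terms, your reduction is sound, and your observation that the purely functional properties used elsewhere in the paper (Lemma~\ref{lem:supermulti} and Lemma~\ref{lem:supporting-upper-bound}) cannot rule out oscillation of $P(n)/P(n-1)$ is correct and well made; you have also correctly identified the pattern theorem together with an insertion/deletion surgery as the engine of Madras's argument.

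The genuine gap is the step you dismiss as ``an additional, more routine, subadditivity-type step'': upgrading the pattern theorem from an exponential lower-tail bound (at most $(\lambda-\delta)^n$ animals contain fewer than $\epsilon n$ legal copies) to a two-sided law of large numbers asserting that the number of legal copies of $B$ in a typical $n$-cell polyomino is $(\gamma+o(1))n$ for a single well-defined constant $\gamma$, and likewise $(\gamma^++o(1))n$ for $B^+$. This is strictly stronger than the pattern theorem and is not routine. A concatenation argument on the counts $a_n[k]$ of animals with exactly $k$ legal copies does yield, after the usual bookkeeping, a concave exponential rate $\rho\mapsto\lambda(\rho)$ for the pattern density; but concavity only guarantees that the maximizing densities form an interval, and without strict concavity at the maximum there is no concentration at a unique $\gamma$. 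If $\gamma$ is not well defined, your final identity $(\gamma+o(1))\,n\,P(n)=(\gamma^++o(1))\,(n+1)\,P(n+1)$ collapses, and with it the conclusion $P(n+1)/P(n)\to\gamma/\gamma^+$. Madras's actual derivation is engineered precisely to avoid identifying any limiting density: roughly, he exploits weighted double-counting identities of the form $\sum_k\binom{k}{j}a_n[k]\le\binom{Cn}{j}a_{n+j}$ (and a reverse inequality via deletion), bounds the left side below by $\binom{\epsilon n}{j}(1-o(1))a_n$ using only the one-sided pattern theorem, and extracts the ratio limit from the interplay of these inequalities as $j$ varies. So your architecture points at the right ingredients, but as written the proof has a hole exactly where you declare the remaining work routine, and filling it would require either proving a pattern-density law of large numbers (not known to follow from subadditivity) or replacing the final double count with Madras's weighted version.
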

The limit is obviously identical to Klarner's constant.

Suppose Conjecture \ref{conj:increasing} is true, we immediately have $P(n)/P(n-1)$ for any $n$ is a lower bound of $\lambda$. With the available values of $P(n)$ in \cite{jensen2003counting}, for $n=56$, we have
\[
    \lambda\ge\frac{P(56)}{P(55)} = \frac{69150714562532896936574425480218}{17326987021737904384935434351490} > 3.9909,
\]
which is a fairly good result.\footnote{In a private correspondence with Gill Barequet, the author was informed of the recent advance by Barequet and Ben-Shachar in calculating values of $P(n)$ till $n=70$, for which we have
\[
    \lambda\ge\frac{P(70)}{P(69)} = \frac{18500792645885711270652890811942343400814}{4619282047583828929546825973053580643926} > 4.0051,
\]
which is even better than the current best lower bound $4.0025$.} The best known lower bound so far is $\lambda> 4.0025$ (see \cite{barequet2016lambda}), but it uses somewhat more computationally intensive techniques. (Fairly speaking, computing $P(56)$ is also computationally intensive, but we are provided this resource in the first place though. Most probably Conjecture \ref{conj:increasing} is initially suggested by the available values of $P(n)$.)

An interesting point is that Conjecture \ref{conj:increasing} also supports establishing a conditional upper bound on $\lambda$, by Theorem \ref{thm:conditional-lower-bound}. However, the constants in $3^{-18} n^{-9}$ makes the estimates very poor. That is the reason why we will devote Section \ref{sec:generating-function} for another approach to an effective conditional upper bound on $\lambda$.

\section{An asymptotic lower bound on $P(n)$}
\label{sec:lower-bound}
This section proves Theorem \ref{thm:lower-bound}. Before doing that, we present the following mediate step.

\begin{proposition} \label{prop:supporting-upper-bound}
    For any $n\ge 1$ and any $\ell$ with $0\le \ell\le n$, we have
    \[
        P(n)\le n^{3\alpha\log n} P(\ell) P(n-\ell)
    \]
    where $\alpha=\frac{3}{\log\frac{16}{5}}$.
\end{proposition}
Note that we assume $P(0)=1$ in case $\ell=0$ or $\ell=n$.

The reason we do not write $n^{\alpha\log n}$ and $\alpha=\frac{9}{\log \frac{16}{5}}$ is that we will compare this $\alpha=\frac{3}{\log \frac{16}{5}} \approx 1.7877$ with the constant $\alpha^*=\frac{1}{\log \frac{4}{3}} \approx 2.4094$ in the work \cite{van1992number} about the number of trees weakly embedded in the lattice $t_n$: 
\begin{equation}\label{eq:tree-upper-bound}
    t_n\le n^{3\alpha^*\log n} t_{\ell} t_{n-\ell}.
\end{equation}
Note that the constant $3$ in $3\alpha$ is from the degree $3$ in Lemma \ref{lem:supporting-upper-bound}.

One would say that the constant $\alpha$ is better than the constant $\alpha^*$. To prove Proposition \ref{prop:supporting-upper-bound} (or the inequality \eqref{eq:tree-upper-bound}), one needs to partition the number $n$ into smaller parts, and recursively partition them further, and finally combine the parts into two collections of size $n$ and $n-\ell$, with the support of Lemma \ref{lem:supermulti}. The fraction $\frac{4}{3}$ in $\alpha^*$ suggests that in each iteration, the proportion of the part be bounded by $\frac{3}{4}$, which is also guaranteed for polyominoes by Proposition \ref{prop:summand-size}. However, the tricky part is that sometimes, the fraction may be smaller than the bound $\frac{3}{4}$, and if we look further into the next iterations, we can obtain better bounds on the fractions. That is the merit of the approach in the proof below, which is actually quite analytically complicated for that reason. (The readers can check for themselves that the proof becomes very simple when we just need $\alpha=\alpha^*=\frac{1}{\log \frac{4}{3}}$.)

The proof uses induction and the base case is $n<8$. One may simply use a computer to confirm it with all choices of $\ell$ for each $n$, but there is a trick to reduce the workload. Since Conjecture \ref{conj:increasing} holds for the available values of $P(n)$, we can apply Observation \ref{obs:same-sum} (in Section \ref{sec:conditional-lower-bound}) for $n<8$ and obtain that the value of $P(\ell) P(n-\ell)$ gets minimal when $\ell=\lfloor n/2\rfloor$. Therefore, we only need to compare $P(n)$ with $\hat P(n)=n^{3\alpha\log n} P(\lfloor n/2\rfloor) P(\lceil n/2\rceil)$ as in the following table. (Note that we only approximate from below the value of $\hat P(n)$ by an integer in the table.) 
\begin{center}
    \begin{tabular}{|c|c|c|c|c|c|c|c|c|}
        \hline
         n&1&2&3&4&5&6&7 \\
         \hline
         P(n) & 1& 2& 6& 19& 63& 216& 760 \\
         \hline
         $\lfloor \hat P(n)\rfloor$& 1 & 41 & 22743 & 11484563 & 6075603512 & 2210715359865 & 604333850455172 \\
         \hline
    \end{tabular}
\end{center}
\begin{proof}
    The conclusion holds for $n<8$ as discussed above. Given some $n\ge 8$, suppose the conclusion holds for any number smaller than $n$, we show that it also holds for $n$.
    
    Since $\ell$ and $n-\ell$ are interchangeable, we assume that $\ell\le n-\ell$, that is $\ell\le n/2$.
    
    By Lemma \ref{lem:supporting-upper-bound}, there exists $k$ so that $(n-1)/4 \le k\le n/2$ and
    \begin{equation}\label{eq:first-eq}
        P(n)\le n^3 P(k) P(n-k).
    \end{equation}

    If $k\ge \ell$, then applying the induction hypothesis to $P(k)$ in \eqref{eq:first-eq}, we have
    \begin{align*}
        P(n)&\le n^3 k^{3\alpha\log k} P(\ell) P(k-\ell) P(n-k) \\
        &\le n^3 n^{3\alpha\log k} P(\ell)P(n-\ell) \\
        &\le n^{3\alpha\log n} P(\ell)P(n-\ell),
    \end{align*}
    where $P(k-\ell)P(n-k)\le P(n-\ell)$ is due to the supermultiplicativity of $P(n)$ in Lemma \ref{lem:supermulti}.
    Note that 
    \[
        3+3\alpha\log k\le 3\alpha\left(\frac{1}{\alpha}+\log\frac{n}{2}\right)=3\alpha\left(\frac{\log\frac{16}{5}}{3}+(\log n) - 1\right)\le 3\alpha\log n.
    \]

    Consider the case $k<\ell$.
    
    Applying Lemma \ref{lem:supporting-upper-bound} to $P(n-k)$ in \eqref{eq:first-eq}, there exists some $a$ so that $(n-k-1)/4\le a\le (n-k)/2$ and
    \begin{equation}\label{eq:second-eq}
        P(n)\le n^3P(k)(n-k)^3 P(a)P(n-k-a)\le n^6 P(k) P(a) P(n-k-a).
    \end{equation}

    If $a\ge \ell-k$ then applying the induction hypothesis to $P(a)$ in \eqref{eq:second-eq}, we have
    \begin{align*}
        P(n)&\le n^6 P(k) a^{3\alpha\log a} P(\ell-k) P(a-(\ell-k)) P(n-k-a) \\
        &\le n^6 n^{3\alpha\log a} [P(k)P(\ell-k)][P(a-(\ell-k)) P(n-k-a)] \\
        &\le n^{3\alpha\log n} P(\ell)P(n-\ell).
    \end{align*}
    In the last inequality, we continue to apply the supermultiplicativity of $P(n)$ in Lemma $1$ to the pairs in square brackets. Also note that 
    \begin{gather*}
        6+3\alpha\log a \le 3\alpha\left(\frac{2}{\alpha}+\log \frac{n-k}{2}\right) \le 3\alpha\left(\frac{2}{\alpha}+\log \left(\frac{1}{2}\cdot \frac{3n+1}{4}\right)\right) \\
        =   3\alpha\left(\frac{2}{\alpha} - 3 + (\log n) + \log \left(3+\frac{1}{n}\right)\right) \le 3\alpha\left(\log n + \frac{2\log\frac{16}{5}}{3} - 3 + \log \left(3+\frac{1}{2}\right)\right) \\
        \le 3\alpha\log n
    \end{gather*}
    since $n-k\le \frac{3n+1}{4}$ and $n\ge 2$.

    Consider the case $a<\ell-k$.
    Applying Lemma \ref{lem:supporting-upper-bound} to $P(n-k-a)$ in \eqref{eq:second-eq}, there exists some $b$ so that $(n-k-a-1)/4\le b\le (n-k-a)/2$ and
    \begin{equation}\label{eq:third-eq}
        P(n)\le n^6 P(k) P(a) (n-k-a)^3 P(b) P(n-k-a-b) \le n^9 P(k+a) P(b) P(n-k-a-b).
    \end{equation}
    
    Since $b\ge (n-k-a-1)/4$ and $n\ge 8$, it follows that $b\ge \ell-k-a$. Indeed, 
    \begin{gather*}
        (n-k-a-1)-4(\ell-k-a)=(n-4\ell)+3k+3a-1\ge (n-4\frac{n}{2}) + 3k+3\frac{(n-k-1)}{4}-1\\
        =-\frac{n}{4} + \frac{9}{4}k - \frac{7}{4} \ge -\frac{n}{4} + \frac{9}{4}\cdot \frac{n-1}{4} - \frac{7}{4} = \frac{1}{16}(5n-37)\ge 0.
    \end{gather*}

    We also observe that
    \begin{gather*}
        b\le\frac{n-k-a}{2}\le\frac{n-k}{2} - \frac{1}{2}\cdot \frac{n-k-1}{4} = \frac{3(n-k)}{8} + \frac{1}{8} \\
        \le \frac{3(n-\frac{n-1}{4})}{8} + \frac{1}{8} = \frac{9n+7}{32} \le \frac{10n}{32} = \frac{5n}{16},
    \end{gather*}
since $n\ge 7$.
    
    Applying the induction hypothesis to $P(b)$ in \eqref{eq:third-eq}, we have
    \begin{align*}
        P(n)&\le n^9 P(k+a) b^{3\alpha\log b} P(\ell-k-a) P(b-(\ell-k-a)) P(n-k-a-b)\\
        &\le n^9 n^{3\alpha\log b} [P(k+a) P(\ell-k-a)][P(b-(\ell-k-a)) P(n-k-a-b)]\\
        &\le n^{3\alpha\log n} P(\ell)P(n-\ell),
    \end{align*}
    since 
    \[
        9+3\alpha\log b=3\alpha\left(\frac{3}{\alpha} + \log b\right)\le 3\alpha\left(\log \frac{16}{5} + \log \left(n\frac{5}{16}\right)\right) = 3\alpha\log n.
    \]
 
    The conclusion follows by induction.
\end{proof}

Although Fekete's lemma is originally stated for submultiplicative sequences, it can be applied to the following weak form of submultiplicativity.
\begin{definition}[Weak form of submultiplicativity] \label{def:weak-submulti}
    A positive sequence $s_n$ is said to be weakly submultiplicative if for every $\ell,m,n$ so that $\ell+m=n$ and $\ell,m\in \left[\frac{n}{3}, \frac{2n}{3}\right]$, we have $s_n\le s_\ell s_m$.
\end{definition}
At first, we note that submultiplicativity and supermultiplicativity can be seen as the dual forms of each other: A positive sequence $s_n$ is supermultiplicative if and only if the sequence $s'_n=1/s_n$ is submultiplicative. The limit value $\sup_n\sqrt[n]{s_n}$ in the former case can be transformed into $\inf_n\sqrt[n]{s'_n}$ for the latter case.
Secondly, the idea of conditioning the ratio of $\ell/m$ is due to the following well known extension of Fekete's lemma.
\begin{lemma*}[Extension of Fekete's lemma]
    If $s_n$ is weakly submultiplicative, then the following limit exists and can be written as:
    \[
        \lim_{n\to\infty} \sqrt[n]{s_n} = \inf_n \sqrt[n]{s_n}.
    \]
\end{lemma*}
The readers may try to adjust the original proof of Fekete's lemma for the extension, or check out \cite[Theorem $22$]{de1952some} for a proof.

Before proving the lower bound of $P(n)$ in Theorem \ref{thm:lower-bound}, we would remark that our approach here is different from the approach in \cite{van1992number} for proving \eqref{eq:tree-lower-bound}, although both use similar supporting results, which are Proposition \ref{prop:supporting-upper-bound} and \eqref{eq:tree-upper-bound}. The approach in \cite{van1992number} uses a result by Hammersley \cite{hammersley1962generalization} that: If $t_{n+m}\le h(n+m)t_nt_m$ then 
\[
    \frac{\log t_n}{n}\ge \log \lambda^* + \frac{\log h(n)}{n} - 4\sum_{m=2n}^\infty \frac{\log h(m)}{m(m+1)}.
\]
Our approach is more self-contained and appears to be more elementary: We transform $P(n)$ into a function $g(n)$ so that $g(n)$ is weakly submultiplicative and $g(n)$ has the same growth rate as $P(n)$. This approach, however, gives better constants in the final result than the ones in \eqref{eq:tree-lower-bound}.

\begin{proof}[Proof of Theorem \ref{thm:lower-bound}]
    It suffices to prove that for every $n$,
    \[
        P(n)\ge 3^{-18\alpha\log 3} n^{-12\alpha\log 3} n^{ -3\alpha\log n}\lambda^n
    \]
    where $\alpha=\frac{3}{\log\frac{16}{5}}$, since the existence of $A,T$ so that $P(n)\ge An^{-T\log n}$ follows.
    
    By Proposition \ref{prop:supporting-upper-bound}, for every $\ell,m,n$ so that $\ell+m=n$, we have
    \[
        P(n)\le n^{3\alpha \log n} P(\ell) P(m).
    \]
    
    For a pair $\ell,m$ so that both $\ell,m$ are in the range $\left[\frac{n}{3}, \frac{2n}{3}\right]$, we can write
    \begin{align*}
        n^{3\alpha\log n} P(n) & \le n^{3\alpha\log n} P(\ell) n^{3\alpha\log n} P(m) \\
        & \le \left(\frac{n}{\ell}\ell\right)^{3\alpha\log(\frac{n}{\ell}\ell)} P(\ell) \left(\frac{n}{m}m\right)^{3\alpha\log(\frac{n}{m}m)} P(m) \\
        & \le \left(\frac{n}{\ell}\right)^{3\alpha\log\frac{n}{\ell}} \left(\frac{n}{\ell}\right)^{3\alpha\log\ell} \ell^{3\alpha\log\frac{n}{\ell}} \ell^{3\alpha\log\ell}  P(\ell) \times \\
            &\qquad \left(\frac{n}{m}\right)^{3\alpha\log\frac{n}{m}} \left(\frac{n}{m}\right)^{3\alpha\log m} m^{3\alpha\log\frac{n}{m}} m^{3\alpha\log m}  P(m) \\
        & \le 3^{3\alpha\log 3} 3^{3\alpha\log\ell} \ell^{3\alpha\log 3} \left[\ell^{3\alpha\log\ell} P(\ell)\right] \times \\
            &\qquad 3^{3\alpha\log 3} 3^{3\alpha\log m} m^{3\alpha\log 3} \left[m^{3\alpha\log m} P(m)\right] \\
        & \le (3^{3\alpha\log 3})^2 (\ell^{3\alpha\log 3})^2 \left[\ell^{3\alpha\log\ell} P(\ell)\right] (m^{3\alpha\log 3})^2 \left[m^{3\alpha\log m} P(m)\right], 
    \end{align*}
    where the last inequality is due to $x^{\log y}=y^{\log x}$ for any $x,y$.
    
    Let us rewrite this by
    \[
        f(n)\le 3^\beta \ell^\beta f(\ell) m^\beta f(m),
    \]
    where $\beta=6\alpha\log 3$ and $f(n)=n^{3\alpha\log n} P(n)$.

    We further have
    \begin{align*}
        3^{3\beta} n^{2\beta} f(n) &\le 3^{2\beta} n^\beta \ell^\beta f(\ell) 3^{2\beta} n^\beta m^\beta f(m) \\
         &\le 3^{2\beta} (3\ell)^\beta \ell^\beta f(\ell) 3^{2\beta} (3m)^\beta m^\beta f(m) \\
        &= \left[3^{3\beta} \ell^{2\beta} f(\ell)\right] \left[3^{3\beta} m^{2\beta} f(m)\right].
    \end{align*}
    
    It means the function 
    \[
        g(n)=3^{3\beta} n^{2\beta} f(n)= 3^{3\beta} n^{2\beta} n^{3\alpha\log n} P(n) = 3^{18\alpha\log 3} n^{12\alpha\log 3} n^{ 3\alpha\log n} P(n)
    \]
    is weakly submultiplicative as in Definition \ref{def:weak-submulti}.
   
    By the extension of Fekete's lemma for $g(n)$, we have
    \[
        \lambda=\lim_{n\to\infty} \sqrt[n]{P(n)} = \lim_{n\to\infty} \sqrt[n]{g(n)} = \inf_n \sqrt[n]{g(n)}.
    \]
    It follows that $g(n)\ge \lambda^n$ for every $n$, which means
    \[
        P(n) \ge 3^{-18\alpha\log 3} n^{-12\alpha\log 3} n^{ -3\alpha\log n}\lambda^n.\qedhere
    \]
\end{proof}

\subsection*{Some remarks}
In principle, if we had 
\begin{equation} \label{eq:impossible-hypothesis}
    P(n)\le\const P(\ell)P(n-\ell)
\end{equation}
in Lemma \ref{lem:supporting-upper-bound} with a leading constant instead of a polynomial, we would obtain
\[
    P(n)\ge An^{-T}\lambda^n.
\]
However, \eqref{eq:impossible-hypothesis} seems to be not the case, since suppose Conjecture \ref{conj:order} is valid, we have $P(\ell)P(n-\ell)\le\const \ell^{-T} (n-\ell)^{-T}\lambda^n\le \const n^{-2T}\lambda^n$ when $\ell/n$ is in a constant range. Meanwhile, $P(n)\ge\const n^{-T}\lambda^n\ge \const n^T P(\ell)P(m)$. It means \eqref{eq:impossible-hypothesis} is valid only if $T=0$, which is quite unlikely.

Another idea to improve the bound in Theorem \ref{thm:lower-bound} may be a more specific $\ell$ in Lemma \ref{lem:supporting-upper-bound}. Suppose Conjecture \ref{conj:increasing} is valid, one can see that the value of $\ell$ that maximizes $P(\ell)P(n-\ell)$ is the value that maximizes the difference between $\ell$ and $n-\ell$. That is either $\ell= (n-1)/4$ or $\ell= (3n+1)/4$. It turns out to work, as in Section \ref{sec:conditional-lower-bound}.

\section{A conditional lower bound on $P(n)$}
\label{sec:conditional-lower-bound}
In this section, we prove Theorem \ref{thm:conditional-lower-bound}. We start with a useful observation.
\begin{observation}\label{obs:same-sum}
    If Conjecture \ref{conj:increasing} holds, then for any $\ell_1,m_1$ ($\ell_1\le m_1$) and $\ell_2,m_2$ ($\ell_2\le m_2$) with $\ell_1+m_1=\ell_2+m_2=n$ and $\ell_1\le\ell_2$, we have
    \[
        P(\ell_1)P(m_1)\ge P(\ell_2)P(m_2).
    \]
\end{observation}
\begin{proof}
As $\ell_2\ge\ell_1$, we write
\[
    \frac{P(\ell_2)}{P(\ell_1)} = \frac{P(\ell_2)}{P(\ell_2-1)} \frac{P(\ell_2-1)}{P(\ell_2-2)}\cdots \frac{P(\ell_1+1)}{P(\ell_1)}.
\]
Since $m_1=n-\ell_1\ge n-\ell_2=m_2$, we write
\[
    \frac{P(m_1)}{P(m_2)} = \frac{P(m_1)}{P(m_1-1)} \frac{P(m_1-1)}{P(m_1-2)}\cdots \frac{P(m_2+1)}{P(m_2)}.
\]
Suppose Conjecture \ref{conj:increasing} holds.
Since $m_1\ge \frac{n}{2}\ge \ell_2$, it follows from Conjecture \ref{conj:increasing} that for any $t$,
\[
    \frac{P(m_1-t)}{P(m_1-1-t)}\ge \frac{P(\ell_2-t)}{P(\ell_2-1-t)}.
\]
Since $m_1-m_2=\ell_2-\ell_1$, we have
\[
    \frac{P(m_1)}{P(m_2)}\ge \frac{P(\ell_2)}{P(\ell_1)}.
\]
The conclusion follows.
\end{proof}
Note that Observation \ref{obs:same-sum} can be seen as a generalization of Lemma \ref{lem:supermulti}, provided that Conjecture \ref{conj:increasing} holds, by setting $\ell_1=0$ and $m_1=n$.

We have the following corollary, which is stronger than Lemma \ref{lem:supporting-upper-bound}.
\begin{corollary} \label{cor:conditional-upper-bound} 
If Conjecture \ref{conj:increasing} holds, then for every $n$, we have
    \[
        P(n)\le n^3 P(\ell) P(n-\ell)
    \]
for $\ell=\lceil(n-1)/4\rceil$. 
\end{corollary}

We also need the following proposition, which corresponds to Proposition \ref{prop:supporting-upper-bound}. In fact, the analyzes in the proofs are quite similar.

\begin{proposition}\label{prop:conditional-upper-bound}
    If Conjecture \ref{conj:increasing} holds, then for every $n$ and any $0\le \ell\le n$, we have
    \[
        P(n)\le n^9 P(\ell) P(n-\ell).
    \]
\end{proposition}
\begin{proof}
    The conclusion holds for any $n<10$, as in the table below. Note that we only need to compare $P(n)$ with $P'(n)= n^9 P(\lfloor \frac{n}{2}\rfloor) P(\lceil \frac{n}{2}\rceil)$, due to Observation \ref{obs:same-sum}.
    \begin{center}
        \scriptsize
        \begin{tabular}{|c|c|c|c|c|c|c|c|c|c|}
            \hline
             n&1&2&3&4&5&6&7&8&9 \\
             \hline
             P(n) & 1& 2& 6& 19& 63& 216& 760 & 2725 & 9910 \\
             \hline
             $P'(n)$& 1 & 512 & 39366 & 1048576 & 23437500 & 362797056 & 4600311198 & 48452599808 & 463742325333\\
             \hline
        \end{tabular}
\end{center}

    Given some $n\ge 10$, suppose the conclusion holds for any number smaller than $n$. We show that it also holds for $n$. 
    
    We assume $\ell\le\frac{n}{2}$ since $\ell$ and $n-\ell$ are exchangeable.

    Let $k_1=\lceil\frac{n-1}{4}\rceil$. By Corollary \ref{cor:conditional-upper-bound}, we have
    \[
        P(n)\le n^3 P(k_1)P(n-k_1).
    \]
    
    If $\ell\le k_1$, then we are done since by Observation \ref{obs:same-sum}, we have
    \[
        P(n)\le n^3 P(k_1)P(n-k_1)\le n^3 P(\ell)P(n-\ell).
    \]
    We proceed with $\ell>k_1$. Let $k_2=\lceil \frac{1}{4} (n-k_1-1)\rceil$. Applying Corollary \ref{cor:conditional-upper-bound} to $P(n-k_1)$, we have
    \[
        P(n)\le n^3 P(k_1) P(n-k_1) \le n^3 P(k_1) (n-k_1)^3 P(k_2) P(n-k_1-k_2)\le n^6 P(k_1) P(k_2) P(n-k_1-k_2).
    \]
    If $\ell-k_1\le k_2$, then we are done since by Observation \ref{obs:same-sum}, we have
    \[
        P(n)\le n^6 P(k_1) P(k_2) P(n-k_1-k_2)\le n^6 P(k_1) P(\ell-k_1) P(n-\ell) \le n^6 P(\ell) P(n-\ell).
    \]
    We proceed with $\ell-k_1 > k_2$. Let $k_3 = \lceil\frac{1}{4} (n-k_1-k_2-1)\rceil$. Applying Corollary \ref{cor:conditional-upper-bound} to $P(n-k_1-k_2)$, we have
    \begin{align*}
        P(n)&\le n^6 P(k_1) P(k_2) P(n-k_1-k_2) \\
        &\le n^6 P(k_1) P(k_2) (n-k_1-k_2)^3 P(k_3) P(n-k_1-k_2-k_3)\\
        &\le n^9 P(k_1) P(k_2) P(k_3) P(n-k_1-k_2-k_3).
    \end{align*}
    We prove that $\ell-k_1-k_2 \le k_3$. Indeed,
    \begin{align*}
        k_3-(\ell-k_1-k_2) &\ge \frac{1}{4} (n-k_1-k_2-1) - \frac{n}{2} + k_1 + k_2\\
        &=\frac{3}{4} (k_1+k_2) - \frac{n}{4} - \frac{1}{4}\\
        &\ge \frac{3}{4}\left(\frac{1}{4}(n-1)+\frac{1}{4}\Bigl(n- \frac{n-1}{4} - 2\Bigr)\right) - \frac{n}{4} - \frac{1}{4}\\
        &= \frac{5}{64}n-\frac{49}{64}\\
        &\ge 0,
    \end{align*}
    since $n\ge 10$. (Note that $k_2=\lceil\frac{1}{4}(n-k_1-1)\rceil \ge\frac{1}{4}(n-\lceil\frac{n-1}{4}\rceil-1)\ge\frac{1}{4}(n-\frac{n-1}{4}-2)$.)

    Since $\ell-k_1-k_2\le k_3$, applying Observation \ref{obs:same-sum}, we have
    \begin{align*}
        P(n)&\le n^9 P(k_1) P(k_2) P(k_3) P(n-k_1-k_2-k_3)\\
        &\le n^9 P(k_1) P(k_2) P(\ell-k_1-k_2) P(n-\ell)\\
        &\le n^9 P(\ell)P(n-\ell).
    \end{align*}

    All cases are covered, and the conclusion follows by induction.
\end{proof}

Now comes the proof of Theorem \ref{thm:conditional-lower-bound}. It is quite similar to the proof of Theorem \ref{thm:lower-bound}.

\begin{proof}[Proof of Theorem \ref{thm:conditional-lower-bound}]
    By Proposition \ref{prop:conditional-upper-bound}, for every $\ell,m,n$ so that $\ell+m=n$, we have
    \[
        P(n)\le n^9 P(\ell) P(m).
    \]
    
    Consider a pair $\ell,m$ so that both $\ell,m$ are in the range $\left[\frac{n}{3}, \frac{2n}{3}\right]$, we can write
    \begin{align*}
        3^{18} n^9 P(n)\le 3^9 n^9 P(\ell) 3^9 n^9 P(m) \le 3^9 (3\ell)^9 P(\ell) 3^9 (3m)^9 P(m) = [3^{18} \ell^9 P(\ell)] [3^{18} m^9 P(m)].
    \end{align*}
   
    It means the sequence 
    \[
        g(n)=3^{18} n^9 P(n)
    \]
    is weakly submultiplicative as in Definition \ref{def:weak-submulti}.
  
    By the extension of Fekete's lemma for $g(n)$, we have
    \[
        \lambda=\lim_{n\to\infty} \sqrt[n]{P(n)} = \lim_{n\to\infty} \sqrt[n]{g(n)} = \inf_n \sqrt[n]{g(n)}.
    \]
    It follows that $g(n)\ge \lambda^n$ for every $n$, which means
    \[
        P(n) \ge 3^{-18} n^{-9} \lambda^n.\qedhere
    \]
\end{proof}

\section{A conditional upper bound on $\lambda$}
\label{sec:generating-function}
While the result in Theorem \ref{thm:lower-bound} may be interesting in the theoretical sense, the upper bound 
\[
    \sqrt[n]{3^{18\alpha\log 3} n^{12\alpha\log 3} n^{ 3\alpha\log n} P(n)}
\]
of $\lambda$ is quite poor given the available values of $P(n)$. 
Even the better constants in Theorem \ref{thm:conditional-lower-bound} still do not give good bounds.\footnote{The readers may notice that a great deal of effort has gone into optimizing the constants.} Therefore, we present another approach, which gives a conditional upper bound. The upper bound is valid when Conjecture \ref{conj:decreasing} is proved. Before presenting the conjecture, we give some related definitions first.

Given two polyominoes $X,Y$, we revisit the concatenation that proves the supermultiplicativity of $P(n)$ in Lemma \ref{lem:supermulti}: The largest cell of $X$ is placed right under the smallest cell of $Y$. The resulting polyomino can be seen as a composition of $X$ and $Y$. But now the concatenation introduces a class of polyominoes.

\begin{definition}
    A polyomino is said to be \emph{constructible} if it is the concatenation of two polyominoes of smaller sizes. And it is \emph{inconstructible} otherwise.
\end{definition}

Fig. \ref{fig:polyomino-inconstructible} gives an example of an inconstructible polyomino.
\begin{figure}[ht]
    \includegraphics[width=0.2\textwidth]{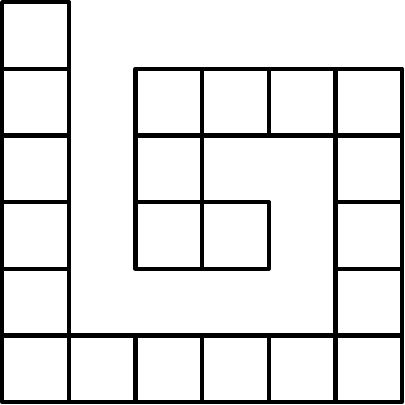}
    \caption{An inconstructible polyomino}
    \label{fig:polyomino-inconstructible}
\end{figure}

Let $Q(n)$ denote the number of inconstructible polyominoes of $n$ cells. The following relation was pointed out in \cite{rands1981animals}: For $n\ge 1$,
\[
    P(n) = \sum_{i=1}^n Q(i) P(n-i),
\]
where we set $P(0)=1$. We also set $Q(0)=1$ for later use.

In fact, the sequence $Q(n)$ can be deduced from the values of $P(n)$. Indeed, let $n_0$ be the largest number such that $P(1), \dots, P(n_0)$ are known, we can compute $Q(1), \dots, Q(n_0)$ because each $Q(n)$ can be computed from $P(1), \dots, P(n)$ and $Q(1), \dots, Q(n-1)$ by
\[
    Q(n) = P(n) - \sum_{i=1}^{n-1} Q(i) P(n-i).
\]

Note that in principle, we can change the definition of the concatenation of two polyominoes and obtain the corresponding definition of constructible polyominoes. The concatenation can be any so that the resulting polyomino is unique with no overlapping (the total number of cells is preserved). For example, we can define the concatenation of polyominoes $X$ and $Y$ by the polyomino obtained by translating them so that the largest cell of $X$ is placed just \emph{on the left} of the smallest cell of $Y$. In fact, $Q(n)$ does not really depend on how we define the concatenation of polyominoes. In any case, $Q(n)$ can be seen just as a function defined in terms of $P(n)$.

The available values of these sequences (with $n_0=56$) suggest the following conjecture.
\begin{conjecture} \label{conj:decreasing}
    The sequence $Q(n)/P(n)$ is monotonically decreasing.
\end{conjecture}
In fact, the sequence is not strictly decreasing with the only known exception $Q(2)/P(2)=Q(3)/P(3)=1/2$. (One may want to conjecture that the subsequence $Q(n)/P(n)$ for $n\ge 3$ is strictly decreasing, but we do not need strict monotonicity in this work.)

Suppose Conjecture \ref{conj:decreasing} holds, we have the following upper bound $U(n)\ge P(n)$:
\begin{equation} \label{eq:upper-bound-function}
    U(n)=
    \begin{cases}
        P(n) &\text{for $n\le n_0$}\\
        \sum_{i=1}^{n_0} Q(i) U(n-i) + \sum_{i=n_0+1}^n R U(i) U(n-i) &\text{otherwise}
    \end{cases}
\end{equation}
where $R=Q(n_0)/P(n_0)$.

The usage of dominating sequences is a common technique in bounding a not so well behaved sequence by a better behaved sequence. Particular applications to lattice animals can be found in \cite{barequet2019upper}, \cite{barequet2021concatenation}.

One can see $U(n)$ as a mixture of linear and bilinear recurrences with some starting values, where the values of $R$, $P(n)$ and $Q(i)$ are coefficients. Note that it is not really a pure recurrence since $U(n)$ appears in both sides. However, we can actually rewrite it as
\begin{align*}
    U(n)&=RU(n) + \sum_{i=1}^{n_0} Q(i) U(n-i) + \sum_{i=n_0+1}^{n-1} R U(i) U(n-i)\\
    \implies (1-R)U(n)&= \sum_{i=1}^{n_0} Q(i) U(n-i) + \sum_{i=n_0+1}^{n-1} R U(i) U(n-i)\\
    \implies U(n) &=\frac{1}{1-R} \left(\sum_{i=1}^{n_0} Q(i) U(n-i) + \sum_{i=n_0+1}^{n-1} R U(i) U(n-i)\right).
\end{align*}
We keep the former representation of $U(n)$ in \eqref{eq:upper-bound-function} for convenience later.

We verify that $U(n)$ is an upper bound of $P(n)$. Indeed, given some $n>n_0$ with $U(n')\ge P(n')$ for any $n'<n$, we have
\begin{align*}
    P(n)&=\sum_{i=1}^{n_0} Q(i)P(n-i) + \sum_{i=n_0+1}^{n} Q(i)P(n-i)\\
    &=Q(n) + \sum_{i=1}^{n_0} Q(i)P(n-i) + \sum_{i=n_0+1}^{n-1} \frac{Q(i)}{P(i)} P(i)P(n-i)\\
    &\le RP(n) + \sum_{i=1}^{n_0} Q(i)U(n-i) + \sum_{i=n_0+1}^{n-1} R U(i)U(n-i)\\
    \implies P(n)
    &\le \frac{1}{1-R} \left(\sum_{i=1}^{n_0} Q(i)U(n-i) + \sum_{i=n_0+1}^{n-1} R U(i)U(n-i)\right) =U(n).
\end{align*}

Calculating the sequence $U(n)$ for large $n$, it seems that $\sqrt[n]{U(n)}$ converges to something about $4.1$. We will investigate the generating function of $U(n)$, and from that deduce a bound $4.1141$ on the growth constant of $U(n)$ via the radius of convergence of the generating function. In particular, we have the following result.\footnote{Using the unpublished values of $P(n)$ for $n\le 70$, which are kindly shared by Gill Barequet in a private correspondence, one can even obtain $\lambda< 4.1038$.}

\begin{theorem} \label{thm:conditional-upper-bound}
    If Conjecture \ref{conj:decreasing} holds, we have
    \[
        \lambda < 4.1141.
    \]
\end{theorem}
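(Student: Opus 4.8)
The plan is to bound the growth constant of the sequence $U(n)$ defined in \eqref{eq:upper-bound-function} by studying its generating function $\mathcal{U}(x)=\sum_{n\ge 0} U(n)x^n$ (with a suitable convention for the low-order terms), since $\limsup_n \sqrt[n]{U(n)}$ is the reciprocal of the radius of convergence of $\mathcal{U}$, and we already know $P(n)\le U(n)$, so $\lambda=\lim_n\sqrt[n]{P(n)}\le\limsup_n\sqrt[n]{U(n)}$. First I would translate the defining relation of $U(n)$ into a functional equation. Writing $\mathcal{Q}_0(x)=\sum_{i=1}^{n_0}Q(i)x^i$ for the truncated ``inconstructible'' generating polynomial and letting $\mathcal{U}_{\le}(x)=\sum_{i=1}^{n_0}U(i)x^i=\sum_{i=1}^{n_0}P(i)x^i$ be the known initial segment, the recurrence $U(n)=\sum_{i=1}^{n_0}Q(i)U(n-i)+\sum_{i=n_0+1}^{n}RU(i)U(n-i)$ for $n>n_0$ becomes, after multiplying by $x^n$ and summing, a quadratic relation of the shape
\[
    \mathcal{U}(x)=\mathcal{U}_{\le}(x)+\mathcal{Q}_0(x)\,\mathcal{U}(x)+R\bigl(\mathcal{U}(x)-\mathcal{U}_{\le}(x)\bigr)^2 + (\text{correction polynomial}),
\]
where the correction polynomial accounts for the finitely many low-degree cross terms that the clean convolution over-counts or under-counts (the book-keeping near $n\approx n_0$ and the constant term $U(0)$); I would pin these down carefully but they do not affect the radius of convergence.

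Next I would solve this quadratic for $\mathcal{U}(x)$ explicitly, obtaining $\mathcal{U}(x)=\dfrac{\bigl(1-\mathcal{Q}_0(x)\bigr)-\sqrt{\Delta(x)}}{2R}+\dots$ where $\Delta(x)$ is a polynomial discriminant built from $\mathcal{Q}_0$, $\mathcal{U}_{\le}$, $R$ and the correction terms, and the sign of the root is forced by requiring $\mathcal{U}(0)$ to take its correct finite value. Since all the coefficients $U(n)$ are nonnegative, Pringsheim's theorem tells us the dominant singularity of $\mathcal{U}$ is at a positive real number $x_0$, and for a function of this algebraic form the only candidates for $x_0$ are either the smallest positive root of the discriminant $\Delta(x)=0$ (a square-root branch point) or the smallest positive pole coming from the prefactor $1/(2R)$ or from clearing denominators (here $R$ is a fixed constant, so no pole there) — in practice $x_0$ is the least positive zero of $\Delta$. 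Then $\limsup_n\sqrt[n]{U(n)}=1/x_0$, and I would compute $x_0$ numerically from the explicit polynomial $\Delta$, using the concrete values $P(1),\dots,P(56)$ and $Q(1),\dots,Q(56)$ from \cite{jensen2003counting}, verify rigorously (e.g. by interval arithmetic or by exhibiting a sign change together with a monotonicity/convexity certificate on a small interval) that $1/x_0 < 4.1141$, and conclude.

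The main obstacle I anticipate is twofold. The first part is purely a matter of care rather than depth: getting the functional equation exactly right, including the constant term and the finitely many boundary terms around $n=n_0$, because the quadratic convolution $\bigl(\mathcal{U}-\mathcal{U}_{\le}\bigr)^2$ does not immediately match $\sum_{i=n_0+1}^{n}RU(i)U(n-i)$ term by term (the outer index only runs up to $i=n-n_0-1$ on the other factor, etc.), so the ``correction polynomial'' must be computed with attention. The second, more genuine, difficulty is the rigorous numerical step: one must certify that the relevant real root $x_0$ of a high-degree polynomial with enormous integer coefficients satisfies $x_0 > 1/4.1141$, and also that no smaller positive singularity of $\mathcal{U}$ exists (in particular that $\Delta$ has no positive zero below $x_0$ and that the chosen branch of the square root stays analytic on $[0,x_0)$); this is the place where the argument has to be airtight, and I would handle it by isolating the root interval, bounding $\Delta$ and $\Delta'$ there, and appealing to Pringsheim's theorem to rule out complex singularities of smaller modulus. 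Everything else — the reduction $\lambda\le\limsup\sqrt[n]{U(n)}$ via $P(n)\le U(n)$, and the extraction of the growth rate from the radius of convergence — is standard generating-function asymptotics.
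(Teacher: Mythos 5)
Your proposal follows essentially the same route as the paper: form the generating function of $U$, derive the quadratic functional equation (with the boundary corrections around $n_0$ handled explicitly), and bound the radius of convergence via the discriminant, using the data for $n_0=56$. The only notable difference is in the rigorous numerical step: rather than isolating the least positive root of $\Delta$ by interval arithmetic, the paper notes that the coefficients $Q(i)-RP(i)$ are nonnegative, so the polynomial $g$ is increasing on $[0,\infty)$ and dominates its values on $[-\theta,0]$, reducing the certification to the single inequality $g(0.24307)\le 2-2\sqrt{R}$ checked in exact rational arithmetic.
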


In other words, if both Conjecture \ref{conj:increasing} and Conjecture \ref{conj:decreasing} are valid, we have a fairly narrow interval containing $\lambda$:
\[
    3.9909 < \lambda < 4.1141.
\]

\begin{remark}
    Note that the two conjectures are not directly related in the sense that neither conjecture implies the other conjecture if $P(n)$ is an arbitrary function. Indeed, consider the function $P'(n)$ satisfying Conjecture \ref{conj:increasing} with the starting values $P'(0)=1, P'(1)=1, P'(2)=2, P'(3)=7$, the corresponding function $Q'(n)$ with $Q'(0)=1, Q'(1)=1, Q'(2)=1, Q'(3)=4$ does not satisfy Conjecture \ref{conj:decreasing} since $Q'(3)/P'(3) > Q'(2)/P'(2)$. On the other hand, consider the functions $P''(n)$ and $Q''(n)$ satisfying Conjecture \ref{conj:decreasing} with the starting values $P''(0)=1, P''(1)=1, P''(2)=2, P''(3)=6, P''(4)=16$ and $Q''(0)=1, Q''(1)=1, Q''(2)=1, Q''(3)=3, Q''(4)=5$, we do not have Conjecture \ref{conj:increasing} for $P''$ since $P''(4)/P''(3) < P''(3)/P''(2)$.

    However, we may guess the approaches to settling the two conjectures could be similar. The matter is that neither conjecture appears to be an easy problem. At any rate, it is good to pursue Conjecture \ref{conj:decreasing} because the current best upper bound 
    \[
        \lambda < 4.5252
    \]
    has been only very recently established by Barequet and Shalah \cite{barequet2022improved}, after a long period of no improvement since the upper bound $\lambda < 4.6495$ by Klarner and Rivest \cite{klarner1973procedure} in 1973.
\end{remark}

\subsection*{Proof of Theorem \ref{thm:conditional-upper-bound}}
Let $f(x)=\sum_{n=0}^{\infty} U(n)x^n$ be the generating function of $U(n)$, we have the following proposition, whose proof in the appendix contains lengthy manipulations. 
\begin{proposition}\label{prop:genfunc-equation}
    \[
        R[f(x)]^2 + \left[\left(\sum_{i=0}^{n_0} (Q(i) - RP(i)) x^i\right)-2\right] f(x) + 1 = 0.
    \]
\end{proposition}

This is the equation $Af^2+Bf+C=0$ for $f=f(x)$ and 
\begin{align*}
A&=R,\\
B&=\left(\sum_{i=0}^{n_0} (Q(i) - RP(i)) x^i\right)-2,\\
C&=1.
\end{align*}

The discriminant is
\[
    \Delta(x)=B^2-4AC=\left[\left(\sum_{i=0}^{n_0} (Q(i) - RP(i)) x^i\right)-2\right]^2 - 4R.
\]
Therefore, 
\[
    f(x) = \frac{1}{2R} \left[2-\left(\sum_{i=0}^{n_0} (Q(i) - RP(i)) x^i\right) + \sqrt{\left[\left(\sum_{i=0}^{n_0} (Q(i) - RP(i)) x^i\right)-2\right]^2 - 4R}\right].
\]

We are not really interested in the actual function $f(x)$ but the condition of $x$ so that $f(x)$ is valid, that is $\Delta(x)\ge 0$. In fact, we need an estimate of the radius of convergence.

Let
\[
    g(x)=\sum_{i=0}^{n_0} (Q(i) - RP(i)) x^i.
\]

\begin{claim*}
    If $\theta>0$ is any value so that $g(\theta)\le 2-2\sqrt{R}$, then $\Delta(x)\ge 0$ for any $x\in[-\theta,\theta]$.
\end{claim*}
\begin{proof}
    It suffices to show that given such $\theta$, we have $g(x)\le 2-2\sqrt{R}$ for any $x\in[-\theta,\theta]$. It is because $g(x)\le 2-2\sqrt{R}$ implies $[g(x)-2]^2-4R\ge 0$, that is $\Delta(x)\ge 0$.
    
    Since
    \[
        Q(i) - RP(i) = P(i)\left(\frac{Q(i)}{P(i)} - \frac{Q(n_0)}{P(n_0)}\right) \ge 0,
    \]
    it follows that $g(x)$ is monotonically increasing in $[0,\infty)$. Therefore, $g(x)\le g(\theta)\le 2-2\sqrt{R}$ for $x\in[0,\theta]$. Meanwhile, also by the nonnegativity of the coefficients of $g(x)$, we have $g(-x)\le g(x)$ for any nonnegative $x$, that is $g(-x)\le g(x) \le g(\theta)\le  2-2\sqrt{R}$ for any $x\in[0,\theta]$. In total, $g(x)\le 2-2\sqrt{R}$ for any $x\in [-\theta,\theta]$.
\end{proof}

Using a computer program to do the calculations with $n_0=56$, we can find that $g(0.24307)\le 2-2\sqrt{R}$. We have tried to make the constant as large as reasonable, say we already have $g(0.24308) > 2-2\sqrt{R}$. Since $\Delta(x)\ge 0$ for $|x|\le 0.24307$, it follows that the radius of convergence is at least $0.24307$. In other words, we have a bound on the growth of $U(n)$:
\[
    \limsup_{n\to\infty} \sqrt[n]{U(n)} \le \frac{1}{0.24307} < 4.1141.
\]
Therefore, if Conjecture \ref{conj:decreasing} is valid, we have 
\[
    \lambda < 4.1141.
\]
\section*{Acknowledgments}
The author would like to thank Gill Barequet and an anonymous reviewer for their various useful remarks on the paper.

\section*{Competing interests}
The author declares that there is no competing interest.

\bibliographystyle{unsrt}
\bibliography{ana-klarner}

\appendix
\section{Proof of Proposition \ref{prop:genfunc-equation}}
We prove the equation of $f(x)$ in Proposition \ref{prop:genfunc-equation}.
At first,
\[
    [f(x)]^2=\left(\sum_{n=0}^{\infty} U(n)x^n\right) \left(\sum_{n=0}^{\infty} U(n)x^n\right) = \sum_{n=0}^{\infty} \sum_{i=0}^n U(i) U(n-i) x^n.
\]
Using this, we manipulate $f(x)$:
\begin{align*}
    f(x)&= \sum_{n=0}^{n_0} U(n) x^n + \sum_{n=n_0+1}^{\infty} U(n) x^n \\
    &= \sum_{n=0}^{n_0} P(n)x^n + \sum_{n=n_0+1}^{\infty} \left(\sum_{i=1}^{n_0} Q(i) U(n-i) + \sum_{i=n_0+1}^{n} RU(i)U(n-i)\right) x^n\\
    &= \sum_{n=0}^{n_0} P(n)x^n + \sum_{n=n_0+1}^{\infty} \sum_{i=1}^{n_0} Q(i) U(n-i) x^n + R \sum_{n=n_0+1}^{\infty} \sum_{i=n_0+1}^{n} U(i)U(n-i) x^n\\
    &= \sum_{n=0}^{n_0} P(n)x^n + \sum_{n=n_0+1}^{\infty} \sum_{i=0}^{n_0} Q(i) U(n-i) x^n - \sum_{n=n_0+1}^{\infty} U(n)x^n \\ 
        & \qquad + R\left([f(x)]^2 - \sum_{n=0}^{n_0} \sum_{i=0}^n U(i)U(n-i)x^n - \sum_{n=n_0+1}^{\infty} \sum_{i=0}^{n_0} U(i)U(n-i) x^n\right) \\
    &= \sum_{n=0}^{n_0} 2P(n)x^n - f(x) - \sum_{n=0}^{n_0}\sum_{i=0}^n RU(i)U(n-i)x^n\\
        & \qquad + \sum_{n=n_0+1}^{\infty} \sum_{i=0}^{n_0} (Q(i) - RU(i)) U(n-i) x^n + R[f(x)]^2.
\end{align*}

Note that
\begin{align*}
    &\sum_{n=n_0+1}^{\infty} \sum_{i=0}^{n_0} (Q(i) - RU(i)) U(n-i) x^n\\
    &= \sum_{i=0}^{n_0} (Q(i) - RU(i)) x^i \sum_{n=n_0+1}^{\infty} U(n-i) x^{n-i}\\
    &= \sum_{i=0}^{n_0} (Q(i) - RU(i)) x^i \left(f(x)-\sum_{n=0}^{n_0-i} U(n)x^n\right)\\
    &= \left(\sum_{i=0}^{n_0} (Q(i) - RU(i)) x^i\right) f(x) - \sum_{i=0}^{n_0} \sum_{n=0}^{n_0-i} (Q(i) - RU(i)) U(n)x^{n+i}.
\end{align*}
The latter summand can be rewritten as
\begin{align*}
    &\sum_{i=0}^{n_0} \sum_{n=0}^{n_0-i} (Q(i) - RU(i)) U(n)x^{n+i}\\
    &=\sum_{i=0}^{n_0} \sum_{n=i}^{n_0} (Q(i) - RU(i)) U(n-i)x^n \\
    &= \sum_{n=0}^{n_0} \sum_{i=0}^{n}  (Q(i) - RU(i)) U(n-i)x^n.
\end{align*}

It follows that
\begin{equation*}
    \begin{multlined}
        2f(x) = \sum_{n=0}^{n_0} 2P(n)x^n - \sum_{n=0}^{n_0}\sum_{i=0}^n RU(i)U(n-i)x^n + \left(\sum_{i=0}^{n_0} (Q(i) - RU(i)) x^i\right) f(x) \\
        - \sum_{n=0}^{n_0} \sum_{i=0}^{n}  (Q(i) - RU(i)) U(n-i)x^n + R[f(x)]^2.    
    \end{multlined}
\end{equation*}
Among them, we have
\begin{align*}
    & \sum_{n=0}^{n_0} 2P(n)x^n - \sum_{n=0}^{n_0}\sum_{i=0}^n RU(i)U(n-i)x^n - \sum_{n=0}^{n_0} \sum_{i=0}^{n}  (Q(i) - RU(i)) U(n-i)x^n\\
    &= \sum_{n=0}^{n_0} 2P(n)x^n - \sum_{n=0}^{n_0}\sum_{i=0}^n [RU(i)U(n-i) + (Q(i) - RU(i)) U(n-i)] x^n\\
    &= \sum_{n=0}^{n_0} 2P(n)x^n - \sum_{n=0}^{n_0}\sum_{i=0}^n Q(i) U(n-i) x^n\\
    &= \sum_{n=0}^{n_0} 2P(n)x^n - \sum_{n=0}^{n_0}\left[P(n) + \sum_{i=1}^n Q(i) U(n-i)\right] x^n\\
    &= \sum_{n=0}^{n_0} 2P(n)x^n - P(0)x^0 - \sum_{n=1}^{n_0}\left[P(n) + \sum_{i=1}^n Q(i) P(n-i)\right] x^n\\
    &= \sum_{n=0}^{n_0} 2P(n)x^n - P(0)x^0 - \sum_{n=1}^{n_0} 2P(n) x^n\\
    &= 2P(0)x^0 - P(0)x^0\\
    &= 1.
\end{align*}

In total, we obtain the conclusion
\begin{align*}
    2f(x) = 1 + \left(\sum_{i=0}^{n_0} (Q(i) - RP(i)) x^i\right) f(x) + R[f(x)]^2\\
    \implies R[f(x)]^2 + \left[\left(\sum_{i=0}^{n_0} (Q(i) - RP(i)) x^i\right)-2\right] f(x) + 1 = 0.
\end{align*}

\section{Supplementary program}
We verify $g(0.24307)\le 2-2\sqrt{R}$ and $g(0.24308) > 2-2\sqrt{R}$ in Section \ref{sec:generating-function} by the following Python $3$ program. The monotonicity of $P(n)/P(n-1)$ and $Q(n)/P(n)$ for the available values of $P(n), Q(n)$ is also checked. The \texttt{input} of the first $56$ values of $P(n)$ is obtained from Table $16.10$ in the book ``Polygons, Polyominoes and Polycubes'' edited by Guttmann.

\begin{lstlisting}[language=python]
#!/usr/bin/python
from fractions import Fraction
# input contains the available values P(1),P(2),...,P(56).
input = '''
1
2
6
19
63
216
760
2725
9910
36446
135268
505861
1903890
7204874
27394666
104592937
400795844
1540820542
5940738676
22964779660
88983512783
345532572678
1344372335524
5239988770268
20457802016011
79992676367108
313224032098244
1228088671826973
4820975409710116
18946775782611174
74541651404935148
293560133910477776
1157186142148293638
4565553929115769162
18027932215016128134
71242712815411950635
281746550485032531911
1115021869572604692100
4415695134978868448596
17498111172838312982542
69381900728932743048483
275265412856343074274146
1092687308874612006972082
4339784013643393384603906
17244800728846724289191074
68557762666345165410168738
272680844424943840614538634
1085035285182087705685323738
4319331509344565487555270660
17201460881287871798942420736
68530413174845561618160604928
273126660016519143293320026256
1088933685559350300820095990030
4342997469623933155942753899000
17326987021737904384935434351490
69150714562532896936574425480218'''

P=[1]+[int(line) for line in input.split()]
N=len(P)-1
print("n_0 =", N)
Q=[1]
for n in range(1, N+1):
    Q.append(P[n] - sum(Q[i]*P[n-i] for i in range(1,n)))

increasing=all(Fraction(P[n],P[n-1]) >= Fraction(P[n-1],P[n-2])
        for n in range(2,N+1))
print("P[n]/P[n-1] is increasing:", increasing)

decreasing=all(Fraction(Q[n],P[n]) <= Fraction(Q[n-1],P[n-1])
        for n in range(1,N+1))
print("Q[n]/P[n] is decreasing:", decreasing)

R=Fraction(Q[N],P[N])
def g(x):
    return sum((Q[i]-R*P[i]) * x**i for i in range(N+1))

for x in Fraction(24307,10**5), Fraction(24308,10**5):
    print("g(" + str(float(x)) + ") <= 2 - 2*sqrt(R):", \
            (g(x)-2)**2 >= 4*R and g(x) <= 2)
 
\end{lstlisting}

We use \texttt{Fraction} instead of \texttt{float} for the sake of an exact computation, since the divisions are made on integers only. (Note that the use of \texttt{float} in printing is only for a neat output.) The comparison is done manually with \texttt{(g(x)-2)**2 >= 4*R and g(x) <= 2} instead of using \texttt{g(x) <= 2-2*sqrt(R)} for the same purpose of precision.

Output of the program is:
\begin{lstlisting}
n_0 = 56
P[n]/P[n-1] is increasing: True
Q[n]/P[n] is decreasing: True
g(0.24307) <= 2 - 2*sqrt(R): True
g(0.24308) <= 2 - 2*sqrt(R): False
\end{lstlisting}

\end{document}